\numberwithin{equation}{section}
\newcommand{\dist}{\text{dist}} 
\newcommand{\trace}{\text{\rm trace\,}}
\newcommand{\deter}{\text{{\rm det}}}
\newcommand{\Lop}{\mathcal{L}}
\newcommand{\R}{{\mathbb R}} 
\newcommand{\J}{\mathcal{J}}
\newcommand{\la}{\left\langle}
\newcommand{\ra}{\right\rangle}
\def \H {{\mathbb H}}
\def \Hn {{\mathbb H}^n}
\def \e {\epsilon}
\newtheorem{Theorem}{Theorem}[section]
\newtheorem{Lemma}[Theorem]{Lemma}
\newtheorem{Definition}[Theorem]{Definition}
\newtheorem{Remark}[Theorem]{Remark}
\begin{document}

\title[Harnack's inequality for non-divergent equations in $\H^n$]{Harnack's inequality for a class of non-divergent equations in the Heisenberg group}
\author[F. Abedin, C. E. Guti\'errez and G. Tralli]{Farhan Abedin, Cristian E. Guti\'errez and Giulio Tralli} 
\address{Department of Mathematics\\Temple University\\Philadelphia, PA 19122}
\email{tuf28546@temple.edu}
\address{Department of Mathematics\\Temple University\\Philadelphia, PA 19122}
\email{gutierre@temple.edu}
\address{Dipartimento di Matematica \\ Sapienza Universit\`a di Roma\\P.le Aldo Moro 5, 00185 Roma}
\email{tralli@mat.uniroma1.it}
\thanks{C. E. G. was partially supported by NSF grant DMS--1600578}

\begin{abstract}
We prove an invariant Harnack's inequality for operators in non-divergence form structured on Heisenberg vector fields when the coefficient matrix is uniformly positive definite, continuous, and symplectic. The method consists in constructing appropriate barriers to obtain pointwise-to-measure estimates for supersolutions in small balls, and then invoking the axiomatic approach from \cite{FGL} to obtain Harnack's inequality.
\end{abstract}
\maketitle 

\section{Introduction}

In this paper we establish regularity properties of solutions to equations of the form
$$\Lop_A u = \sum\limits_{i,j = 1}^{2n} a_{ij}(z) X_i X_j u = 0, \qquad z \in \Omega \subset \R^{2n+1},$$
where $\Omega$ is an open set, $A(z)=\left(a_{ij}(z)\right)_{i,j}$ is symmetric and uniformly positive definite, and $X_1,\ldots, X_{2n}$ are the Heisenberg vector fields, see Section \ref{sec:preliminaries}.

A challenging problem that researchers have been interested in is to determine the validity of an invariant Harnack's inequality $\sup_B u\leq C\,\inf_B u$ for all non negative solutions $u$,
and for all metric balls $B$, with a constant $C$ depending only on the ellipticity constants of the coefficient matrix $A$. 
When $\mathcal L_A$ is replaced by a standard uniformly elliptic operator with measurable coefficients, that inequality holds and is the celebrated Harnack inequality of Krylov and Safonov \cite{KS80}. Its proof depends in a crucial way upon the maximum principle of Aleksandrov, Bakelman and Pucci, see for example \cite[Section 9.8]{GT83}. 
A number of insightful generalizations and applications of this principle for various elliptic and degenerate-elliptic pdes have been developed, for example, in \cite{CafGut, Cab, DL03, N09, Saf, M14}. In the context of the Heisenberg group, ABP-type maximum principles have been studied in \cite{DGN03, GM04, BCK15}. However, a difficulty to deal with the operator $\mathcal L_A$ is that it is not known if a maximum principle holds true in a form that permits to establish pointwise-to-measure estimates for super solutions such as \cite[Theorem 2.1.1]{Gut01}.
As a result, this precludes one from extending the method of Krylov and Safonov to obtain Harnack's inequality in the present context. 

On the other hand, it was proved in \cite{Gutierrez-Tournier-Harnack}, and extended to more general contexts in \cite{Tralli-Critical-Density}, that when the "contrast" of the coefficient matrix $A$, i.e., the ratio between its maximum and minimum eigenvalues, is sufficiently close to one, then the required pointwise-to-measure estimates for super solutions can be obtained by constructing appropriate barriers, and the Harnack inequality follows from the general theory developed in \cite{FGL}.  

The purpose in this paper is to show that when the matrix coefficient $A$ is uniformly continuous in $\Omega$ and is symplectic (see Definition \ref{def:symplectic matrix}), then it is possible to construct appropriate barriers in a simple and self contained way and obtain the desired critical density estimates for super solutions on sufficiently small balls. 
This yields Harnack's inequality on balls having sufficiently small radius bounded by a constant depending on the modulus of continuity of the matrix $A$, see Theorem \ref{Harnack'sInequality}. 

For non-divergent sub-Riemannian equations with H\"older continuous coefficients, Harnack's inequalities, with constants depending on the H\"older continuity, are proved using parametrix methods in \cite{BU07} for Carnot groups, and in \cite{BBLU} for H\"ormander vector fields.
Theorem \ref{5.2} below yields a stronger result in $\H^1$, since in this case every matrix of unit determinant is symplectic.

Our techniques can easily be used to obtain Harnack's inequality for non-divergent uniformly elliptic operators in $\R^N$ with uniformly continuous coefficients. This is related to a classical result of Serrin \cite{S55}. Serrin's proof follows the one for harmonic functions via a Poisson formula, and it exploits the explicit knowledge of the Poisson kernels on balls for constant coefficient operators. The Dini-continuity is then used to show that these kernels can be used as barriers for the variable coefficient operators. In the setting of degenerate elliptic operators, the notion of the Poisson kernel is much more delicate (see \cite{LU97}), and it is unclear how to proceed with Serrin's strategy. On the other hand, the fundamental solution for the Heisenberg Laplacian in $\H^n$ is well known \cite{Folland}, and the barriers we construct to obtain the critical density estimates are modeled after these special solutions. In fact, in $\H^1$, the fundamental solution of every constant coefficient operator can be written explicitly via a change of basis for the vector fields. A similar change of basis in $\H^n$ for $n\geq 2$ is applicable if we are dealing with a symplectic matrix (see Remark \ref{remsolfond} below), but not with generic matrices (this is related to the fact that the Lie algebra of $\H^n$ is not free when $n \geq 2$, see \cite{BU04, BU05}).

An outline of the paper is as follows. Section \ref{sec:preliminaries} contains preliminaries on the Heisenberg group and the operators considered.
In Section \ref{sec:main lemmas} we include the backbone of our results. Lemma 
\ref{identitylemma} contains an important identity, valid for operators with constant symplectic matrix coefficients, that is essential for the proof of Lemma \ref{subsolutionlemma}.
This leads to the fundamental Lemma \ref{lemmabar} where the desired barrier is constructed, and later used in Section \ref{sec:critical density} to prove the critical density estimates.
Finally, Section \ref{sec:harnack inequality} contains the Harnack inequality and H\"older estimates.

\section{Preliminaries}\label{sec:preliminaries}

We denote coordinates in $\R^{2n+1}$ as $(x,t) = (x_1,\ldots, x_{2n}, t) \in \R^{2n} \times \R$, let $\mathbb{I}_n$ denote the $n \times n$ identity matrix, and define the $2n \times 2n$ matrix
\[\J := \left( \begin{array}{cc}
0 & -\mathbb{I}_n \\
\mathbb{I}_n & 0 \end{array} \right).\] 
The Heisenberg Group $\Hn$ is the homogeneous Lie group $(\R^{2n+1}, \circ, \delta_r)$ equipped with the composition law
$$(x,t) \circ (\xi, \tau) := \left(x + \xi, t + \tau + 2\la \J x, \xi \ra \right)$$
and the family of dilations
$$\delta_{r} : \Hn \rightarrow \Hn, \ \delta_{r}(x,t) = (r x, r^2 t), \ r>0.$$
Here $\la \cdot, \cdot \ra$ is the standard inner product in $\R^{2n}$. The identity element of the group is $0=(0,0)$, and the inverse is  $(x,t)^{-1} := (-x,-t)$.
We consider the homogeneous symmetric norm
$$\rho(x,t) := (|x|^4 + t^2)^{\frac{1}{4}}$$
and its associated distance $d((x,t), (\xi, \tau)) := \rho((x,t)^{-1} \circ (\xi,\tau))$ (c.f. \cite{Cygan}). The balls defined by this distance (called Koranyi balls) will be denoted $B_R((x,t)) := \left\{(\xi,\tau) : d((x,t), (\xi, \tau)) < R \right\}$. For any $(x,t)\in\Hn$ and any $R>0$, we have
$$|B_R((x,t))|=|B_R(0)|=R^Q|B_1(0)|,$$
where $|\cdot|$ denotes the Lebesgue measure. The number
$$Q := 2n+2$$
is the homogeneous dimension of $\Hn$. The Lie algebra of $\Hn$ is generated by the horizontal vector fields
\begin{equation}\label{exi}
X_i = \partial_{x_i} + 2(\J x)_i \partial_t, \ i = 1,\ldots, 2n,
\end{equation}
the horizontal gradient of a function $\psi : \Hn \rightarrow \R$ is
$$\nabla_{\H} \psi := (X_1 \psi, \ldots, X_{2n} \psi),$$
and the horizontal hessian of $\psi$ is the matrix
$$D^2_{\H} \psi := \left(X_{i,j} \psi \right)_{i,j = 1,\ldots, 2n},$$
where $X_{i,j} \psi = \frac{1}{2}\left(X_i X_j \psi + X_j X_i \psi \right)$. To simplify the notation, we will always denote the points
$$z=(x,t),\qquad\zeta=(\xi,\tau).$$

We are concerned with the following class of differential operators
\begin{equation}\label{operator}
\mathcal{L}_A := \text{tr}\left(A(z) D^2_{\H} \ \cdot \right) = \sum\limits_{i,j = 1}^{2n} a_{ij}(z) X_{i,j} = \sum\limits_{i,j = 1}^{2n} a_{ij}(z) X_i X_j
\end{equation}
where $A(z) = (a_{ij}(z))_{i,j = 1,\ldots, 2n} \in \R^{2n \times 2n}$ is symmetric and uniformly  elliptic
\begin{equation}\label{ellipticity}
\lambda \mathbb{I}_{2n} \leq A(z) \leq \Lambda \mathbb{I}_{2n}, \ \text{for all} \ z \in \Omega\subset\Hn,
\end{equation}
with $\Omega$ an open set, and $0 < \lambda \leq \Lambda < +\infty$ fixed constants. Since we  consider only solutions $u$ to the homogeneous equation $\mathcal{L}_A u = 0$, we may assume without loss of generality that $\deter(A(z))=1$ for all $z\in\Omega$. This implies $\lambda\leq 1\leq \Lambda$.
The class of symmetric matrices with unit determinant satisfying \eqref{ellipticity} is denoted by $M_n(\lambda, \Lambda, \Omega)$.

For any constant matrix $M \in M_n(\lambda, \Lambda,\Omega)$ we let
\begin{equation}\label{distancefunction}
\phi_M (x,t) := \la M^{-1} x, x \ra^2  + t^2.
\end{equation}
At times, it will be convenient for us to work with the modified norms $\rho_M:= \phi_M^{1/4}$ and the corresponding modified distance $d_M(z,\zeta):= \rho_M(\zeta^{-1} \circ z)$, which are both one-homogeneous with respect to the dilations $\delta_r$.
It is easy to show that $\rho$ and $\rho_M$ are equivalent:
\begin{equation}\label{equivalentquasidistance}
\sqrt{\dfrac{1}{\Lambda}} \rho \leq \rho_M \leq \sqrt{\dfrac{1}{\lambda}} \rho.
\end{equation}
This implies, in particular, that $d_M$ is a quasi-distance, with constant $\sqrt{\Lambda/\lambda}$ in the quasi-triangular inequality. Moreover, if we denote by $B^M$ the balls with respect to $d_M$, we have
\begin{equation}\label{equivalentquasiballs}
B_{\sqrt{\lambda}r}(z) \subseteq B^M_r(z) \subseteq B_{\sqrt{\Lambda}r}(z) \quad \forall r > 0,\, z \in \H^n.
\end{equation}
Throughout the paper we will denote by $\rm{dist}(\cdot,\cdot)$ and $\rm{diam}(\cdot)$ the distance and the diameter of sets with respect to $d$, whereas we will denote by $\rm{dist}_M(\cdot,\cdot)$ the one with respect to the modified distance $d_M$.

\section{Main Lemmas}\label{sec:main lemmas}

We begin by defining a structural condition on the coefficient matrices that will be necessary to establish our  results, such as the critical density property, and eventually, Harnack's inequality.

\begin{Definition}\label{def:symplectic matrix} 
$A\in M_n(\lambda, \Lambda, \Omega)$ is said to be symplectic if it satisfies the identity 
\begin{equation}\label{symplectic} 
A^{-1}(z) = \J^t A(z) \J
\end{equation}
\noindent at all points $z \in \Omega$.
\end{Definition}

Notice that every symmetric, positive definite $2 \times 2$ matrix with unit determinant is symplectic. 

\vskip 0.3cm

\noindent {\bf Example.} If $A$ is given in block form
\[A(z) = \left( \begin{array}{cc}
A_{11}(z) & A_{12}(z) \\
A_{12}^t(z) & A_{22}(z) \end{array} \right),\] 
\noindent where $A_{11}, A_{22}, A_{12} \in \R^{n \times n}$, $A_{11}, A_{22}$ are symmetric, then $A$ satisfies condition \eqref{symplectic} if and only if the blocks satisfy the identities 
\begin{align*}
 A_{11}(z) A_{22}(z) - A_{12}^2(z) &= \mathbb{I}_n,\\
A_{11}(z)A_{12}^t(z) &= A_{12}(z)A_{11}(z),\\
A_{22}(z)A_{12}(z) &= A_{12}^t(z)A_{22}(z).
\end{align*}
In particular the matrix \[A(z) = \left( \begin{array}{cc}
A_{11}(z) & 0 \\
0 & A^{-1}_{11}(z) \end{array} \right)\]
is symplectic, for any $A_{11}$ symmetric and positive definite.

\vskip 0.3cm

The following lemma establishes some useful identities satisfied by the function $\phi_M$ defined in \eqref{distancefunction} when $M$ is a symplectic matrix with constant entries.

\begin{Lemma}\label{identitylemma} Suppose $M$ is a symmetric, positive definite and  symplectic constant matrix. Then
\begin{equation}\label{usefulidentity}
\dfrac{Q+2}{4} \la M \nabla_{\H} \phi_M, \nabla_{\H} \phi_M \ra = \phi_M \mathcal{L}_M \phi_M=4(Q + 2) \la M^{-1}x,x \ra  \phi_M\quad \forall (x,t)\in\Hn.
\end{equation}
Conversely, if the first identity in  \eqref{usefulidentity} holds for $\phi_M$ in \eqref{distancefunction}, then the matrix $M$ must be symplectic. 
\end{Lemma}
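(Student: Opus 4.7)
The plan is a direct computation. I abbreviate $u := \la M^{-1}x, x\ra$, so that $\phi_M = u^2 + t^2$. Using $X_i = \partial_{x_i} + 2(\J x)_i\partial_t$ and the symmetry of $M^{-1}$, the chain rule gives
\[
X_i \phi_M = 4u\,(M^{-1}x)_i + 4t\,(\J x)_i,
\]
i.e.\ $\nabla_{\H}\phi_M = 4u\,M^{-1}x + 4t\,\J x$ as a vector in $\R^{2n}$. Expanding $\la M\nabla_{\H}\phi_M, \nabla_{\H}\phi_M\ra$ produces four terms; the two cross terms vanish because $\la x, \J x\ra = 0$ by antisymmetry of $\J$, and $\la M\J x, M^{-1}x\ra = \la \J x, M^{t}M^{-1}x\ra = \la \J x, x\ra = 0$ using $M^{t} = M$. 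The symplectic identity $\J^{t}M\J = M^{-1}$ then rewrites the surviving $t^{2}$-term:
\[
\la M\nabla_{\H}\phi_M, \nabla_{\H}\phi_M\ra = 16u^{3} + 16t^{2}\la \J^{t}M\J x, x\ra = 16u\,(u^{2}+t^{2}) = 16u\,\phi_M.
\]

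Differentiating once more, one obtains
\[
X_{i}X_{j}\phi_M = 8(M^{-1}x)_i(M^{-1}x)_j + 8(\J x)_i(\J x)_j + 4u\,(M^{-1})_{ji} + 4t\,\J_{ji}.
\]
Pairing against $m_{ij}$ and summing, the $\J_{ji}$ contribution drops out by symmetry of $M$ versus antisymmetry of $\J$, while the remaining three terms evaluate, via $\la M\J x, \J x\ra = \la \J^{t}M\J x, x\ra = u$ (symplectic identity once more) and $\trace(MM^{-1}) = 2n$, to
\[
\mathcal{L}_M \phi_M = 8u + 8u + 8nu = 8(n+2)\,u = 4(Q+2)\,u,
\]
since $Q = 2n+2$. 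Multiplying by $\phi_M$ and noting $\tfrac{Q+2}{4}\cdot 16 = 4(Q+2)$, both equalities in \eqref{usefulidentity} follow simultaneously.

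For the converse, I repeat exactly the same computation without assuming $\J^{t}M\J = M^{-1}$; setting $w := \la \J^{t}M\J x, x\ra$, the same steps give $\la M\nabla_{\H}\phi_M, \nabla_{\H}\phi_M\ra = 16u^{3} + 16t^{2}w$ and $\mathcal{L}_M\phi_M = 8(n+1)u + 8w$. Substituting into the first equality of \eqref{usefulidentity} and specializing to $t = 0$ reduces it to $4(Q+2)u^{3} = 8(n+1)u^{3} + 8u^{2}w$, i.e.\ $u^{2}(w-u) = 0$ for every $x \in \R^{2n}$. Since $M^{-1}$ is positive definite, $u > 0$ whenever $x \neq 0$, so $w = u$ as quadratic forms on $\R^{2n}$; as both $\J^{t}M\J$ and $M^{-1}$ are symmetric, this forces $\J^{t}M\J = M^{-1}$. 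The main obstacle is purely organizational: one must track which of $M$, $M^{-1}$, $\J^{t}M\J$ and $\J$ occurs at each stage so that the symmetry of $M$, the antisymmetry of $\J$, and the symplectic identity are each invoked at exactly the right moment, and no cross term is overlooked in the expansion of the quadratic forms.
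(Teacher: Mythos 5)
Your proof is correct and follows essentially the same route as the paper: compute $\nabla_{\H}\phi_M$ and $X_iX_j\phi_M$ directly, kill the cross and $t\J_{ji}$ terms via antisymmetry of $\J$ and symmetry of $M$, and invoke the symplectic identity to turn $\la M\J x,\J x\ra$ into $\la M^{-1}x,x\ra$, yielding both equalities in \eqref{usefulidentity}. Your converse argument (keeping $w=\la \J^t M\J x,x\ra$ general, setting $t=0$, and using positive definiteness plus symmetry to force $\J^t M\J=M^{-1}$) is a correct, explicit version of what the paper dismisses with ``follows from a review of the previous identities.''
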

\begin{proof} Direct calculation shows
$$X_j \phi_M = 4 (M^{-1} x)_j \la M^{-1} x, x \ra + 4t (\J x)_j, \ \text{and}$$
$$X_i X_j \phi_M = 4 (M^{-1})_{ji} \la M^{-1} x, x \ra + 8 (M^{-1}x)_i (M^{-1}x)_j +4t \mathcal{\J}_{ji} + 8 (\J x)_i (\J x)_j.$$
Using the antisymmetry of $\J$, we thus have
\begin{equation}\label{Hessian}
X_{i,j} \phi_M = 4(M^{-1})_{ji} \la M^{-1} x, x \ra + 8 (M^{-1}x)_i (M^{-1}x)_j + 8 (\J x)_i (\J x)_j.
\end{equation}
By \eqref{symplectic} we obtain
\begin{align*} 
\la M \nabla_{\H} \phi_M, \nabla_{\H} \phi_M \ra & = \la 4 \la M^{-1}x, x \ra x + 4t M \J x, 4 \la M^{-1}x, x \ra M^{-1}x + 4t \J x \ra \\
& = 16 \la M^{-1} x, x \ra^3 + 16t^2 \la M \J x, \J x \ra \\
& = 16 \la M^{-1} x, x \ra^3 + 16t^2 \la M^{-1} x, x \ra \\
& = 16\la M^{-1} x, x \ra \phi_M(x,t), \ \text{and}
\end{align*}
\begin{align*}
\mathcal{L}_M \phi_M & = \sum\limits_{i,j = 1}^{2n} M_{ij} X_{i,j} \phi_M \\
& = \text{tr} \left(M \left(4\la M^{-1}x,x \ra M^{-1} + 8 (M^{-1}x) \otimes (M^{-1}x) + 8 (\J x) \otimes (\J x)\right) \right) \\
& = 4(Q-2)\la M^{-1}x,x \ra + 8 \la M^{-1}x,x \ra + 8 \la M \J x, \J x \ra \\
& = 4(Q + 2) \la M^{-1}x,x \ra,
\end{align*}
which proves \eqref{usefulidentity}.\\
The converse follows from a review of the previous identities. \end{proof}

\begin{Remark}\label{remsolfond} From \eqref{usefulidentity}, it is easy to see that the function
\begin{equation}\label{fundamentalsolution}
\Gamma_M := \phi_M^{-\frac{Q-2}{4}}
\end{equation}
is, up to a multiplicative constant, the fundamental solution of $\Lop_M$ with pole at $0$. In fact, away from the origin we have
\begin{equation}\label{Gammazero}
\Lop_M \Gamma_M = \frac{Q-2}{4}\phi_M^{-\frac{Q+6}{4}} \left[\dfrac{Q+2}{4} \la M \nabla_X \phi_M, \nabla_X \phi_M \ra - \phi_M \mathcal{L}_M \phi_M \right] = 0.
\end{equation}
\end{Remark}

We now proceed to state precisely the continuity assumptions that are needed on the coefficient matrices $A(z)$ in \eqref{operator}.  In the following, $C(\Omega)$ denotes the set of continuous matrices in $\Omega$, and $||\cdot||$ denotes the operator norm of a matrix. 

\begin{Definition} The modulus of continuity of $A(z)$ at the point $z_0=(x_0, t_0) \in \Omega$ is
$$\omega_A(z_0; \e):=\sup_{z \in B_{\epsilon}(z_0)\cap \Omega}||A(z)-A(z_0)||, \ \epsilon > 0.$$
\end{Definition}

\begin{Definition}\label{equicontinuity} Let $\omega : [0, 1) \rightarrow [0,1)$ be a non-decreasing function satisfying $\lim\limits_{s \rightarrow 0^+} \omega(s) = \omega(0) = 0$. The class $C(\Omega,\omega)$ is the set of matrices $A\in C(\Omega)$ for which $$\omega_A(z_0; \epsilon) \leq \omega(\epsilon)\qquad \mbox{ for all }z_0\in \Omega \ \text{and} \ 0<\e<1. $$
\end{Definition}

\noindent {\bf Example.} Fix $D_0>0$ and consider the class of matrices $A \in C(\Omega)$ satisfying the following uniformly Dini-condition with respect to the distance $d$:
$$\int_0^1{\frac{\omega_A(z_0;\e)}{\e}\,{\rm{d}}\e}\leq D_0\qquad\forall z_0\in\Omega.$$
Then, this class is in $C(\Omega,\omega)$ with  $\omega(\e)=\frac{D_0}{-\log{\e}}$. Indeed, for all $z_0\in\Omega$ and any $0<\e<1$, we have
$$D_0\geq \int_0^1{\frac{\omega_A(z_0;s)}{s}\,{\rm{d}}s}\geq \int_\e^1{\frac{\omega_A(z_0;s)}{s}\,{\rm{d}}s}\geq \omega_A(z_0;s)\int_\e^1{\frac{1}{s}\,{\rm{d}}s}=\frac{\omega_A(z_0;s)}{-\log\e}.$$
A concrete case is the class of $d$-H\"older continuous matrices with exponent $\alpha$, i.e., $\omega_A(z_0;s)\leq C\e^\alpha$; in this setting an invariant Harnack inequality for $\Lop_A$ is proved in \cite{BU07}.

\vskip 0.4cm

In the following lemma we exploit the continuity of the coefficients, the property \eqref{symplectic}, and Lemma \ref{identitylemma}.

\begin{Lemma}\label{subsolutionlemma} Fix $0 < \delta < \frac{1}{2}$. Suppose $A \in M_n(\lambda, \Lambda)$ is continuous at the point  $z_0\in \Omega$, and the matrix $A(z_0)$ is symplectic. Denote $\alpha : = \dfrac{Q-2}{4} + \delta$ and let $M := A(z_0)$. Consider the function
\begin{equation}\label{kernel}
g(\zeta) := -\frac{1}{\alpha} \phi_M^{-\alpha}(\zeta), \qquad\mbox{ for  } \zeta \neq 0.
\end{equation}
There exists $\epsilon_0 > 0$ depending only on $\lambda, \Lambda, Q, \delta$ and $\omega_A(z_0; \cdot)$ such that  
\[
-\text{tr}\left(A(z) (D^2_{\H} g)(\zeta)\right) \geq 0,\quad \ \text{for all} \ z \in B_{\epsilon_0}(z_0)\cap\Omega \text{ and } \zeta \neq 0.
\]
If in addition $A$ is in $C(\Omega, \omega)$, then the choice of $\e_0$ can be made independent of $z_0$ and $A$ (it will depend only on  $\lambda, \Lambda, Q, \delta$ and $\omega$).
\end{Lemma}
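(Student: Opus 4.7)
The plan is to compute $\mathrm{tr}(A(z) D^2_\H g(\zeta))$ explicitly, then split the resulting expression into a frozen-coefficient part (using $M=A(z_0)$) and a perturbation part (involving $A(z)-M$). The identity from Lemma \ref{identitylemma} will give a strictly positive ``margin'' on the frozen part, and continuity of $A$ at $z_0$ will make the perturbation arbitrarily small.

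First I would differentiate: using $X_j g = \phi_M^{-\alpha-1} X_j \phi_M$ and then applying $X_i$, one gets after symmetrizing
\begin{equation*}
X_{i,j} g(\zeta) = -(\alpha+1)\phi_M^{-\alpha-2}(\zeta)\, X_i \phi_M(\zeta) X_j \phi_M(\zeta) + \phi_M^{-\alpha-1}(\zeta)\, X_{i,j}\phi_M(\zeta).
\end{equation*}
Hence
\begin{equation*}
-\mathrm{tr}(A(z) D^2_\H g(\zeta)) = \phi_M^{-\alpha-2}(\zeta) \Bigl[(\alpha+1)\langle A(z)\nabla_\H \phi_M(\zeta),\nabla_\H \phi_M(\zeta)\rangle - \phi_M(\zeta)\,\mathrm{tr}(A(z) D^2_\H \phi_M(\zeta))\Bigr],
\end{equation*}
so I must show the bracket $T(z,\zeta)\geq 0$ for $z\in B_{\e_0}(z_0)\cap\Omega$ and $\zeta\neq 0$. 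If $\zeta=(0,\tau)$, formula \eqref{Hessian} and the explicit form of $\nabla_\H\phi_M$ show $T\equiv 0$, so assume $\xi\neq 0$.

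Next I would split $T = T_1 + T_2$ where $T_1$ is obtained by replacing $A(z)$ with $M=A(z_0)$. Since $M$ is symplectic, Lemma \ref{identitylemma} yields $\phi_M\mathcal L_M\phi_M = \tfrac{Q+2}{4}\langle M\nabla_\H\phi_M,\nabla_\H\phi_M\rangle$, and the choice $\alpha = \frac{Q-2}{4}+\delta$ gives $\alpha+1-\frac{Q+2}{4} = \delta$, so
\begin{equation*}
T_1 = \delta\,\langle M\nabla_\H\phi_M,\nabla_\H\phi_M\rangle = 16\delta\,\langle M^{-1}\xi,\xi\rangle\,\phi_M(\zeta),
\end{equation*}
again via Lemma \ref{identitylemma}. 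This is the strictly positive margin.

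For the perturbation $T_2$, bound it by $\omega_A(z_0;\e)$ times the appropriate quantities. Using $\lambda\mathbb I_{2n}\leq M$ gives $|\nabla_\H\phi_M|^2\leq\lambda^{-1}\langle M\nabla_\H\phi_M,\nabla_\H\phi_M\rangle = 16\lambda^{-1}\langle M^{-1}\xi,\xi\rangle\phi_M$. From \eqref{Hessian}, orthogonality of $\J$, and the equivalence $|\xi|^2\leq \Lambda\langle M^{-1}\xi,\xi\rangle$, one estimates $\|D^2_\H\phi_M(\zeta)\|\leq C_1(\lambda,\Lambda)\,\langle M^{-1}\xi,\xi\rangle$; combining with $|\mathrm{tr}((A(z)-M)H)|\leq 2n\,\|A(z)-M\|\,\|H\|$, one gets
\begin{equation*}
|T_2| \leq \omega_A(z_0;\e)\Bigl[(\alpha+1)\cdot 16\lambda^{-1} + 2nC_1\Bigr]\langle M^{-1}\xi,\xi\rangle\,\phi_M(\zeta) =: C_2\,\omega_A(z_0;\e)\,\langle M^{-1}\xi,\xi\rangle\,\phi_M(\zeta)
\end{equation*}
for $z\in B_\e(z_0)$. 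Therefore $T\geq (16\delta - C_2\,\omega_A(z_0;\e))\,\langle M^{-1}\xi,\xi\rangle\,\phi_M(\zeta)$, and choosing $\e_0>0$ so that $\omega_A(z_0;\e_0)\leq 16\delta/C_2$ proves the first claim. If $A\in C(\Omega,\omega)$, then $\omega_A(z_0;\e)\leq \omega(\e)$ uniformly, so $\e_0$ can be taken to depend only on $\lambda,\Lambda,Q,\delta,\omega$.

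The main obstacle, I expect, is not analytical but bookkeeping: one has to verify that all the ``scale factors'' $\phi_M$ and $\langle M^{-1}\xi,\xi\rangle$ cancel correctly so that the perturbation is truly controlled by the margin $16\delta\langle M^{-1}\xi,\xi\rangle\phi_M$ uniformly in $\zeta\neq 0$. The symplectic hypothesis is used in exactly one place — to apply Lemma \ref{identitylemma} — which is what makes the $\delta$-margin appear cleanly in $T_1$.
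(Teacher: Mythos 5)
Your proposal is correct and follows essentially the same route as the paper: the same explicit computation of $-\mathrm{tr}\left(A(z) D^2_{\H} g\right)$, the use of Lemma \ref{identitylemma} with $\alpha+1=\frac{Q+2}{4}+\delta$ to produce the $\delta$-margin, and a perturbation estimate in terms of $\omega_A(z_0;\epsilon)$ controlled by $\la M^{-1}\xi,\xi\ra\,\phi_M$. The only (immaterial) difference is bookkeeping: you freeze $M$ in the entire bracket and put all of $A(z)-M$ into the error term, whereas the paper keeps $A(z)$ in the $\delta$-part (paying a factor $\lambda/\Lambda$) and applies the identity only to the $\frac{Q+2}{4}$-part.
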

\begin{proof} For any $z \in \Omega$, we have for all $\zeta \neq 0$
\begin{align*}
-\text{tr}\left(A(z) (D^2_{\H} g)(\zeta)\right) & = -\sum_{i,j = 1}^{2n} a_{ij}(z) X_i X_j g(\zeta) \\
& = \phi_M^{-\alpha-2} \left\{(\alpha + 1) \la A(z) \nabla_{\H} \phi_M, \nabla_{\H} \phi_M \ra - \phi_M \text{tr}\left(A(z) D^2_{\H} \phi\right) \right\} \\
& = \phi_M^{-\alpha-2} \left\{\left(\dfrac{Q+2}{4} + \delta \right) \la A(z) \nabla_{\H} \phi_M, \nabla_{\H} \phi_M \ra - \phi_M \text{tr}\left(A(z) D^2_{\H} \phi\right) \right\}.
\end{align*}
Consider the expressions
\begin{center}
$\displaystyle{\text{I} := \delta \la A(z) \nabla_{\H}\phi_M, \nabla_{\H} \phi_M \ra},$

$\displaystyle{\text{II} := \dfrac{Q+2}{4} \la A(z) \nabla_{\H} \phi_M, \nabla_{\H} \phi_M \ra - \phi_M \text{tr}\left(A(z) D^2_{\H} \phi\right)}.$
\end{center}
We first estimate $\text{I}$. Since $A,M\in M_n(\lambda,\Lambda)$, we have
\begin{equation}\label{estimateforI}
\text{I} =\delta \la A(z) (\nabla_{\H} \phi_M)(\zeta), (\nabla_{\H} \phi_M)(\zeta) \ra \geq \delta \left(\dfrac{\lambda}{\Lambda} \right) \la M (\nabla_{\H} \phi_M)(\zeta), (\nabla_{\H} \phi_M)(\zeta) \ra.
\end{equation}
To estimate $\text{II}$, we write $A(z) = A(z_0) + [A(z) - A(z_0)] = M + R(z)$. Using \eqref{usefulidentity}, we thus obtain
\begin{align*}
\text{II} & = \dfrac{Q+2}{4} \la A(z) \nabla_{\H} \phi_M, \nabla_{\H} \phi_M \ra - \phi_M \text{tr}\left(A(z) D^2_{\H} \phi\right) \\
& = \dfrac{Q+2}{4} \la (M + R(z)) \nabla_{\H} \phi_M, \nabla_{\H} \phi_M \ra - \phi_M \text{tr}\left((M + R(z)) D^2_{\H} \phi\right) \\
& = \dfrac{Q+2}{4} \la R(z) \nabla_{\H} \phi_M, \nabla_{\H} \phi_M \ra - \phi_M \text{tr}\left(R(z) D^2_{\H} \phi\right)
\end{align*}
Assume now that $d(z, z_0) \leq \epsilon$ for $\epsilon \in(0,1)$ to be determined. Then $||R(z)|| \leq \omega(\epsilon)$, and there exists a positive constant $C_1 = C_1(\lambda,\Lambda,Q)$ such that
\begin{equation}\label{boundforquadraticform}
\bigg| \dfrac{Q+2}{4} \la R(z) (\nabla_{\H} \phi_M)(\zeta), (\nabla_{\H} \phi_M)(\zeta) \ra \bigg| \leq C_1 \omega(\epsilon) \la M (\nabla_{\H} \phi_M)(\zeta), (\nabla_{\H} \phi_M)(\zeta) \ra.
\end{equation}
Also, by using \eqref{Hessian}, we have
\begin{eqnarray*}
\text{tr}\left(R(z) D^2_{\H} \phi_M(\zeta)\right) &=& 4 \la M^{-1} \xi, \xi \ra \text{tr}(M^{-1}R(z)) +\\
&+& 8 \la R(z) M^{-1} \xi, M^{-1} \xi \ra + 8 \la R(z) \J \xi, \J \xi \ra.
\end{eqnarray*}
Since $\|R(z)\| < \omega(\epsilon)$, and $\Lambda^{-1} \leq M^{-1} \leq \lambda^{-1}$, we conclude that there exists a constant $C_2 = C_2(\lambda, \Lambda)$ such that 
$$\bigg|\text{tr}\left(R(z) D^2_{\H} \phi_M(\zeta)\right) \bigg| \leq C_2 \omega(\epsilon) |\xi|^2.$$
Therefore, 
\begin{equation}\label{boundfortrace}
\bigg|\phi_M(\zeta) \text{tr}\left(R(z) D^2_{\H} \phi_M(\zeta)\right) \bigg| \leq C_2 \omega(\epsilon) |\xi|^2 \phi_M(\zeta).
\end{equation}
Now, by \eqref{usefulidentity}, we obtain
$$\la M (\nabla_{\H} \phi_M)(\zeta), (\nabla_{\H} \phi_M)(\zeta) \ra = 16 \la M^{-1} \xi, \xi \ra \phi_M(\zeta) \geq \frac{16}{\Lambda}|\xi|^2 \phi_M(\zeta).$$
In conjunction with \eqref{boundfortrace}, this implies the existence of a constant $C_3 = C_3(\lambda,\Lambda)$ such that
$$\bigg|\phi_M(\zeta) \text{tr}\left(R(z) D^2_{\H}\phi(\zeta)\right) \bigg| \leq C_3 \omega(\epsilon) \la M (\nabla_{\H} \phi_M)(\zeta), (\nabla_{\H} \phi_M)(\zeta) \ra.$$
\noindent With the bounds \eqref{boundforquadraticform} and \eqref{boundfortrace}, we thus conclude there exists some constant $C = C(\lambda,\Lambda,Q)$ such that
\begin{equation}\label{estimateforII}
|\text{II}| \leq C \omega(\epsilon) \la M (\nabla_{\H} \phi_M)(\zeta), (\nabla_{\H} \phi_M)(\zeta) \ra.
\end{equation}
Combining our estimates \eqref{estimateforI} and \eqref{estimateforII} for $\text{I}$ and $\text{II}$ respectively, and by noticing that $\phi_M \geq 0$, we obtain
$$-\text{tr}\left(A(z) (D^2_X g)(\zeta)\right) \geq \phi^{-\alpha-2} \left\{\delta \left(\dfrac{\lambda}{\Lambda} \right) - C \omega(\epsilon) \right\} \la M (\nabla_{\H} \phi_M)(\zeta), (\nabla_{\H} \phi_M)(\zeta) \ra.$$
We now choose $\epsilon = \epsilon_0$, with $\epsilon_0$ such that $\omega_A(z_0; \epsilon_0) \leq \dfrac{\delta \lambda}{C \Lambda}$.
\end{proof}

In preparation for the construction of barriers in the following lemma, we fix $0<\delta<\frac{1}{2}$, a point $z_0 \in \Omega$, and for $A\in M_n(\lambda, \Lambda) \cap C(\Omega, \omega)$ symplectic, we consider as before $M = A(z_0)$; also recall $g$ defined in \eqref{kernel}. For any bounded open set $O\subset\Hn$, we define the function
\begin{equation}\label{defh}
h(z) := \int\limits_{O} g(z^{-1} \circ \zeta) \ d\zeta.
\end{equation}
Also, for $\psi$ a smooth non decreasing function of one variable such that $\psi(s)=1$ for $s\geq 2$ and $\psi(s)=0$ for $s<1$, we define for $\mu > 0$ the function
\begin{equation}\label{defhmu}
h_{\mu}(z) := \int\limits_{O} \psi_{\mu}\left(d_M(z,\zeta)\right) g(z^{-1} \circ \zeta) \ d\zeta,
\end{equation}
where $\psi_\mu(s) :=\psi\left(s/\mu\right)$. The function $h_{\mu}$ is $C^{\infty}$ smooth, and  converges uniformly to $h$ as $\mu \rightarrow 0^+$. In the following we denote
\begin{equation}\label{eta}
\eta:=2\sqrt{\Lambda/\lambda}+1.
\end{equation}

\begin{Lemma}\label{lemmabar} Let $\epsilon_0$ be the constant given in Lemma \ref{subsolutionlemma}. There exists a positive constant $C$ depending only on $\lambda, \Lambda, Q, \delta$ such that for all $0<r\leq \e_0$ and $z_0\in\Omega$ with $B_{\eta r}(z_0) \subseteq \Omega$, and for all open sets $O' \Subset O\subseteq B_{r}(z_0)$, we have 
\begin{equation}\label{boundbarr}
\Lop_A h_{\mu}(z)\geq C r^{-4\delta}\qquad\forall\,z\in O',
\end{equation}
for all $0 < \mu < \min\left\{\dfrac{r}{\sqrt{\lambda}},  \dfrac{\text{dist}(O',\partial O)}{2\sqrt{\Lambda}} \right\}$, and for all $A\in M_n(\lambda, \Lambda) \cap C(\Omega, \omega)$ symplectic.
\end{Lemma}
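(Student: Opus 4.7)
The plan is to express $h_\mu$ as a left-group convolution, commute $\Lop_A$ with the integral via left-invariance of the $X_i$, and reduce the estimate to the pointwise positivity from Lemma \ref{subsolutionlemma} by means of a vanishing-integral identity on all of $\mathbb{H}^n$.

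Since $\phi_M(\eta)=\phi_M(\eta^{-1})$ we have $g(z^{-1}\circ\zeta)=g(\zeta^{-1}\circ z)$, so setting $\tilde g_\mu(\eta):=\psi_\mu(\rho_M(\eta))g(\eta)$ we may write $h_\mu(z)=\int_{\mathbb{H}^n}\mathbf{1}_O(\zeta)\tilde g_\mu(\zeta^{-1}\circ z)\,d\zeta$. Left-invariance of the $X_i$ together with the symmetry of $A$ then gives, after the change of variable $\eta=\zeta^{-1}\circ z$ (which has unit Jacobian),
\[
\Lop_A h_\mu(z)=\int_{K(z,O)}\text{tr}\bigl(A(z)\,D^2_{\mathbb{H}}\tilde g_\mu\bigr)(\eta)\,d\eta,\qquad K(z,O):=\{\eta\in\mathbb{H}^n\,:\,z\circ\eta^{-1}\in O\}.
\]

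The heart of the argument is the identity $\int_{\mathbb{H}^n}\text{tr}(A(z)\,D^2_{\mathbb{H}}\tilde g_\mu)(\eta)\,d\eta=0$. To prove it I take a smooth cutoff $\chi_R$ equal to $1$ on $B_R(0)$ and supported in $B_{2R}(0)$ with $|\nabla_{\mathbb{H}}\chi_R|\lesssim R^{-1}$ and $|\Lop_{A(z)}\chi_R|\lesssim R^{-2}$. Since $\chi_R\tilde g_\mu$ is smooth with compact support and the $X_i$ are divergence-free with respect to Lebesgue measure, $\int\Lop_{A(z)}(\chi_R\tilde g_\mu)\,d\eta=0$. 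Expanding by Leibniz and using $|\tilde g_\mu|\lesssim\rho^{-4\alpha}$, $|\nabla_{\mathbb{H}}\tilde g_\mu|\lesssim\rho^{-4\alpha-1}$ on $\{R\le\rho\le 2R\}$, each of the two cross terms is $O(R^{Q-4\alpha-2})=O(R^{-4\delta})\to 0$, and dominated convergence (using $\Lop_{A(z)}\tilde g_\mu\in L^1(\mathbb{H}^n)$ by the same scaling) completes the claim. This is the hardest step: the decay of $\tilde g_\mu$ and its horizontal gradient must outpace the growth of the cutoff derivatives, which works precisely because the exponent $\alpha=(Q-2)/4+\delta$ lies in the narrow window $((Q-2)/4,Q/4)$ dictated by $0<\delta<1/2$.

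Using this vanishing identity, $\Lop_A h_\mu(z)=-\int_{\mathbb{H}^n\setminus K(z,O)}\text{tr}(A(z)\,D^2_{\mathbb{H}}\tilde g_\mu)(\eta)\,d\eta$. For $\eta\notin K(z,O)$ the point $z\circ\eta^{-1}$ lies outside $O$, so $\rho(\eta)=d(z,z\circ\eta^{-1})\ge\dist(O',\partial O)$, which forces $\rho_M(\eta)\ge\rho(\eta)/\sqrt{\Lambda}>2\mu$ by the hypothesis on $\mu$; hence $\psi_\mu(\rho_M(\eta))\equiv 1$ and $\tilde g_\mu=g$ on this set. Since $z\in O'\subset B_r(z_0)\subset B_{\epsilon_0}(z_0)\cap\Omega$, Lemma \ref{subsolutionlemma} applies, and revisiting its proof (after possibly shrinking $\epsilon_0$ so that the $\omega$-correction is at most half the leading term) yields the quantitative bound $-\text{tr}(A(z)D^2_{\mathbb{H}}g)(\eta)\ge c_0\,\phi_M^{-\alpha-1}(\eta)\langle M^{-1}\eta_x,\eta_x\rangle$ with $c_0=c_0(\lambda,\Lambda,Q,\delta)>0$. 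The triangle inequality for the Kor\'anyi norm yields $K(z,O)\subset B_{2r}(0)\subset B^M_{2r/\sqrt{\lambda}}(0)$; since the integrand is homogeneous of degree $-4\alpha-2$ under the dilations $\delta_s$ while $d\eta$ scales by $s^Q$, one obtains
\[
\int_{\{\rho_M\ge R\}}\phi_M^{-\alpha-1}\langle M^{-1}\eta_x,\eta_x\rangle\,d\eta=R^{-4\delta}I_0(M),
\]
with $I_0(M)$ uniformly positive over symplectic $M\in M_n(\lambda,\Lambda,\Omega)$ thanks to the eigenvalue bounds $\Lambda^{-1}\mathbb{I}_{2n}\le M^{-1}\le\lambda^{-1}\mathbb{I}_{2n}$. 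Taking $R=2r/\sqrt{\lambda}$ then gives $\Lop_A h_\mu(z)\ge Cr^{-4\delta}$ with $C=C(\lambda,\Lambda,Q,\delta)$, as required.
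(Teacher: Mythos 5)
Your argument is correct, but it follows a genuinely different route from the paper. The paper fixes $z\in O'$, writes the integral of $(X_iX_jg_\mu)(\zeta^{-1}\circ z)$ over $O$ as an integral over the ball $B^M_{2r/\sqrt{\lambda}}(z)$ minus one over $B^M_{2r/\sqrt{\lambda}}(z)\setminus O$, discards the second by the sign information of Lemma \ref{subsolutionlemma}, and converts the first, via the divergence theorem, into a boundary flux over $\partial B^M_{2r/\sqrt{\lambda}}(0)$; the positive lower bound $Cr^{-4\delta}$ then comes from showing that the flux quantity $\alpha(M,r)$ is independent of $r$ (using $\Lop_M\Gamma_M=0$) and uniformly bounded below via the coarea formula. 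You instead prove the global identity $\int_{\H^n}\mathrm{tr}\bigl(A(z)D^2_{\H}\tilde g_\mu\bigr)\,d\eta=0$ by a cutoff-and-decay argument (valid precisely because $4\alpha+2=Q+4\delta>Q$), flip the integral to the exterior region $\H^n\setminus K(z,O)$, where the cutoff is inactive and the integrand has a sign, and extract the rate $r^{-4\delta}$ from dilation homogeneity of $\phi_M^{-\alpha-1}\langle M^{-1}x,x\rangle$ on $\{\rho_M\geq 2r/\sqrt{\lambda}\}$. This avoids the fundamental solution, the flux-constancy argument and the coarea computation entirely, at the price of two things you correctly supply: the $L^1(\H^n)$ control of $\Lop_{A(z)}\tilde g_\mu$ at infinity (your ``hardest step'', which is where $\delta>0$ enters your proof, playing the role that the exponent $r^{-4\delta}$ plays in the paper's flux computation), and a \emph{quantitative} strengthening of Lemma \ref{subsolutionlemma}: the paper's choice of $\epsilon_0$ only yields $-\mathrm{tr}(A(z)D^2_{\H}g)\geq 0$, which suffices for the paper's boundary-flux argument but not for yours, so you must rerun that proof with $\omega_A(z_0;\epsilon_0)\leq \delta\lambda/(2C\Lambda)$ to get the lower bound $c_0\,\phi_M^{-\alpha-1}\langle M^{-1}\xi,\xi\rangle$. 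Since this only shrinks $\epsilon_0$ by a fixed factor depending on the same parameters, it is harmless, but note that it does redefine the constant ``$\epsilon_0$ given in Lemma \ref{subsolutionlemma}'' referenced in the statement, so in a self-contained write-up you should state the strengthened version of that lemma explicitly. All the remaining steps (left-invariance and unimodular change of variables giving the $K(z,O)$ formula, $K(z,O)\subset B_{2r}(0)\subset B^M_{2r/\sqrt{\lambda}}(0)$, $\rho_M(\eta)>2\mu$ on the complement so that $\tilde g_\mu=g$ there, and the uniform positivity of $I_0(M)$ from the eigenvalue bounds) check out, and the resulting constant depends only on $\lambda,\Lambda,Q,\delta$, as required.
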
 
\begin{proof} 
Let $g_{\mu}(\zeta) = \psi_{\mu}(\rho_M(\zeta)) g(\zeta)$. Since $d_M$ is symmetric,
$$h_{\mu}(z) := \int\limits_{O} g_{\mu}(z^{-1} \circ \zeta) \ d\zeta = \int\limits_{O} g_{\mu}(\zeta^{-1} \circ z) \ d\zeta.$$
For $z \in B_{r}(z_0)$, by \eqref{equivalentquasiballs} and the hypotheses of the lemma we have that $$B_{r}(z_0) \subset B_{2r}(z) \subseteq B^M_{\frac{2r}{\sqrt{\lambda}}}(z) \subseteq B_{2\sqrt{\frac{\Lambda}{\lambda}}r}(z)\subset B_{\eta r}(z_0) \subset \Omega.$$ In particular, $O \subset  B^M_{2r/\sqrt{\lambda}}(z) \subset \Omega$ for any $z \in O$. By the smoothness of $g_{\mu}$ and the left-invariance of the vector fields and the Lebesgue measure, we have
\begin{align*}
X_i X_j h_{\mu}(z) & = \int\limits_O (X_i X_j g_{\mu})(\zeta^{-1} \circ z) \ d\zeta \\
& = \int\limits_{B^M_{2r/\sqrt{\lambda}}(z)} (X_i X_j g_{\mu})(\zeta^{-1} \circ z) \ d\zeta - \int\limits_{B^M_{2r/\sqrt{\lambda}}(z) \backslash O} (X_i X_j g_{\mu})(\zeta^{-1} \circ z) \ d\zeta \\
& = \int\limits_{B^M_{2r/\sqrt{\lambda}}(0)} (X_i X_j g_{\mu})(\zeta) \ d\zeta - \int\limits_{B^M_{2r/\sqrt{\lambda}}(z) \backslash O} (X_i X_j g_{\mu})(\zeta^{-1} \circ z) \ d\zeta \\
& = \int\limits_{\partial B^M_{2r/\sqrt{\lambda}}(0)} (X_j g_{\mu})(\zeta) \dfrac{X_i \rho_M(\zeta)}{|D \rho_M(\zeta)|} \ d\sigma(\zeta) - \int\limits_{B^M_{2r/\sqrt{\lambda}}(z) \backslash O} (X_i X_j g_{\mu})(\zeta^{-1} \circ z) \ d\zeta,
\end{align*}
where $|D\rho_M|$ stands for the Euclidean length of the standard gradient in $\R^{2n+1}$ and $d \sigma$ is the standard $2n$-dimensional Hausdorff measure in $\R^{2n+1}$. The last step follows from the divergence theorem since the vector fields $X_j$ in \eqref{exi} are divergence-free.
Next we want to replace $g_\mu$ by $g$ in the last two integrals.
If $0<\mu<r/\sqrt{\lambda}$, then for $\zeta\in\partial B^M_{2r/\sqrt{\lambda}}(0)$, $\rho_M(\zeta)>2\mu$ and so $g_{\mu} = g$ in the first integral. 
In the second integral, $g_{\mu}(\zeta^{-1} \circ z) =  g(\zeta^{-1} \circ z)$ if and only if $d_M(\zeta, z)>2\mu$. If $O'$ is a compactly contained subset of $O$, then for $z\in O'$ and $\zeta\notin O$ we have 
$d_M(\zeta,z)\geq \dist_M(O',\partial O)\geq \dfrac{1}{\sqrt{\Lambda}}\dist(O',\partial O)$ by \eqref{equivalentquasidistance}. So if $\mu$ satisfies $0 < 2\sqrt{\Lambda}\mu < \text{dist}(O',\partial O)$, then we can eliminate $\mu$ in the second integral. Therefore, for any $z \in O'$, we obtain
$$X_i X_j h_{\mu}(z) = \int\limits_{\partial B^M_{2r/\sqrt{\lambda}}(0)} (X_j g)(\zeta) \dfrac{X_i \rho_M(\zeta)}{|D \rho_M(\zeta)|} \ d\sigma(\zeta) - \int\limits_{B^M_{2r/\sqrt{\lambda}}(z) \backslash O} (X_i X_j g)(\zeta^{-1} \circ z) \ d\zeta.$$
Multiplying the last identity by $a_{ij}(z)$ and adding over $i,j$ yields
\begin{align*}
(\mathcal{L}_{A} h_{\mu})(z) & = \int\limits_{\partial B^M_{2r/\sqrt{\lambda}}(0)} \dfrac{\la A(z) (\nabla_{\H} g)(\zeta) , (\nabla_{\H} \rho_M)(\zeta) \ra}{|D \rho_M(\zeta)|} \ d\sigma(\zeta) \\
& - \int\limits_{B^M_{2r/\sqrt{\lambda}}(z) \backslash O} \text{tr}\left(A(z) (D^2_{\H} g)(\zeta^{-1} \circ z) \right) \ d\zeta \qquad\forall\,z\in O'.
\end{align*}
Since $z\in O'\subset O\subseteq B_r(z_0)\subseteq B_{\epsilon_0}(z_0) \cap \Omega$ and $A $ satisfies the assumptions of Lemma \ref{subsolutionlemma}, we conclude that 
\begin{equation*}
-\text{tr}\left(A(z) (D^2_{\H} g)(\zeta^{-1} \circ z) \right) \geq 0 \ \text{for all} \ \zeta \in B^M_{2r/\sqrt{\lambda}}(z)\backslash O.
\end{equation*}
Thus,
\begin{eqnarray}\label{rhsQ}
(\mathcal{L}_{A} h_{\mu})(z) & \geq &\int\limits_{\partial B^M_{2r/\sqrt{\lambda}}(0)} \dfrac{\la A(z) (\nabla_{\H} g)(\zeta) , (\nabla_{\H} \rho_M)(\zeta) \ra}{|D \rho_M(\zeta)|} \ d\sigma(\zeta) \nonumber\\
& = & 4\int\limits_{\partial B^M_{2r/\sqrt{\lambda}}(0)} \dfrac{\la A(z) (\nabla_{\H} \rho_M)(\zeta) , (\nabla_{\H} \rho_M)(\zeta) \ra}{\rho_M^{4\alpha+1}(\zeta) |D \rho_M(\zeta)|} \ d\sigma(\zeta) \nonumber\\
& = &4\left(\frac{\sqrt{\lambda}}{2r}\right)^{4\alpha+1}\int\limits_{\partial B^M_{2r/\sqrt{\lambda}}(0)} \dfrac{\la A(z) (\nabla_{\H} \rho_M)(\zeta) , (\nabla_{\H} \rho_M)(\zeta) \ra}{|D \rho_M(\zeta)|} \ d\sigma(\zeta) \nonumber\\
& = &4\left(\frac{\sqrt{\lambda}}{2r}\right)^{Q-1+4\delta}\int\limits_{\partial B^M_{2r/\sqrt{\lambda}}(0)} \dfrac{\la A(z) (\nabla_{\H} \rho_M)(\zeta) , (\nabla_{\H} \rho_M)(\zeta) \ra}{|D \rho_M(\zeta)|} \ d\sigma(\zeta)\nonumber\\
& \geq &\frac{4^{1-2\delta}\lambda^{2\delta}}{r^{4\delta}}\frac{\lambda}{\Lambda}\left(\left(\frac{\sqrt{\lambda}}{2r}\right)^{Q-1}\int\limits_{\partial B^M_{2r/\sqrt{\lambda}}(0)} \dfrac{\la M (\nabla_{\H} \rho_M)(\zeta) , (\nabla_{\H} \rho_M)(\zeta) \ra}{|D \rho_M(\zeta)|} \ d\sigma(\zeta)\right),
\end{eqnarray}
for all $z\in O'$.
Let 
\[
\alpha(M,r)=
\frac{1}{r^{Q-1}}\,\int\limits_{\partial B^M_{r}(0)} \dfrac{\la M (\nabla_{\H} \rho_M)(\zeta) , (\nabla_{\H} \rho_M)(\zeta) \ra}{|D \rho_M(\zeta)|} \ d\sigma(\zeta).
\]
We now adapt an argument from \cite[Section 5.5]{BLU} to show that $\alpha(M,r)$ can be bounded below by a positive constant independent of $r$ and $M$.  
First notice that, for all $r>0$,
$$
\alpha(M,r)=\frac{1}{2-Q}\int\limits_{\partial B^M_r(0)} \frac{\left\langle M\nabla_{\H}\Gamma_M(z),\nabla_{\H}\rho_M(z)\right\rangle}{|D\rho_M(z)|} \ d\sigma(z),$$
where $\Gamma_M$ is as in \eqref{fundamentalsolution}.
By \eqref{Gammazero} and the divergence theorem, we obtain for any $r_2>r_1>0$
\begin{align*}
0 & =\frac{1}{2-Q}\int_{B^M_{r_2}(0)\smallsetminus B^M_{r_1}(0)}{\Lop_M\Gamma_M (z)\ dz}\\
&=\frac{1}{2-Q}\int\limits_{\partial B^M_{r_2}(0)} \left\langle M\nabla_{\H}\Gamma_M(z),\frac{\nabla_{\H}\rho_M(z)}{|D\rho_M(z)|}\right\rangle \ d\sigma(z) - \frac{1}{2-Q} \int\limits_{\partial B^M_{r_1}(0)} \left\langle M\nabla_{\H}\Gamma_M(z),\frac{\nabla_{\H}\rho_M(z)}{|D\rho_M(z)|}\right\rangle \ d\sigma(z) \\
&=
\alpha(M,r_2)-\alpha(M,r_1),
\end{align*}
obtaining that $\alpha(M,r)=\alpha(M,1)$ for all $0<r<\infty$. 
Multiplying the last identity by $r^{Q-1}$, integrating from $0$ to $1$, and using the coarea formula yields
\begin{align*}
\dfrac{1}{Q}\alpha(M,1) & = \int\limits_0^1r^{Q-1} \alpha(M,r) \ dr = \int_0^1 \int_{\partial B^M_r(0)} \frac{\left\langle M\nabla_{\H}\rho_M(z),\nabla_{\H}\rho_M(z)\right\rangle}{|D\rho_M(z)|} d\sigma(z) \ dr \\
& = \int_{B^M_r(0)} \left\langle M\nabla_{\H}\rho_M(z),\nabla_{\H}\rho_M(z)\right\rangle dz\\
&=\frac{1}{16}\int\limits_{B^M_1(0)} \frac{\left\langle M\nabla_{\H}\phi_M(z),\nabla_{\H}\phi_M(z)\right\rangle}{\phi^{\frac{3}{2}}_M(z)} \ dz\qquad \text{by definition of $\phi_M$}\\
&=\int\limits_{B^M_1(0)} \frac{\left\langle M^{-1}x,x\right\rangle}{\phi^{\frac{1}{2}}_M(x,t)} \ dxdt\geq \frac{\lambda}{\Lambda}\int\limits_{B_{\sqrt{\lambda}}(0)} \frac{|x|^2}{\sqrt{|x|^4+t^2}} \ dxdt=:\tilde{C}, \qquad \text{by \eqref{usefulidentity} and \eqref{equivalentquasidistance}.}
\end{align*}
Therefore we obtain that $\alpha(M,r)$ is bounded below uniformly in $r$ and $M$,
and so from \eqref{rhsQ}
$$(\mathcal{L}_{A} h_{\mu})(z)  \geq \frac{4^{1-2\delta}\lambda^{2\delta}}{r^{4\delta}}\frac{\lambda}{\Lambda}\tilde{C}=:\frac{C}{r^{4\delta}}
$$
for all $z\in O'$ and for all $\mu$ satisfying $0 < \mu < \min\left\{\dfrac{r}{\sqrt{\lambda}},  \dfrac{\text{dist}(O',\partial O)}{2\sqrt{\Lambda}} \right\}$. 
\end{proof}

\begin{Remark}
Among all the possible sets $O$ of fixed measure, the set that maximizes the quantity
$$\int_{O}\frac{1}{d^{4\alpha}(z,\zeta)} \ d\zeta$$
is the ball centered at $z$ satisfying $\left|B_1\right|R^Q=\left|B_R(z)\right|=\left|O\right|$ (see \cite[pg. 2112]{Gutierrez-Tournier-Harnack}). We thus have
\begin{eqnarray*}
\int_{O}\frac{1}{d^{4\alpha}(z,\zeta)} \ d\zeta&\leq&\int_{B_R(z)}\frac{1}{d^{4\alpha}(z,\zeta)} \ d\zeta=R^{Q-4\alpha}\int_{B_1(0)}\frac{1}{\rho^{4\alpha}(\zeta)} \ d\zeta=\\
&=&\left| O\right|^{1-\frac{4\alpha}{Q}} \left|B_1\right|^{\frac{4\alpha}{Q}-1}\int_{B_1(0)}\frac{1}{\rho^{4\alpha}(\zeta)} \ d\zeta.
\end{eqnarray*}
Hence, by \eqref{equivalentquasidistance}, we get
\begin{eqnarray*}
0\geq h(z) &=& -\frac{1}{\alpha}\int_{O}\frac{1}{d_M^{4\alpha}(z,\zeta)} \ d\zeta \geq -\frac{\Lambda^{2\alpha}}{\alpha}\int_{O}\frac{1}{d^{4\alpha}(z,\zeta)} \ d\zeta\\
&\geq& -\frac{\Lambda^{2\alpha}}{\alpha}\left| O\right|^{1-\frac{4\alpha}{Q}}\left|B_1\right|^{\frac{4\alpha}{Q}-1}\int_{B_1(0)}\frac{1}{\rho^{4\alpha}(\zeta)} \ d\zeta.
\end{eqnarray*}
Therefore there exists a positive constant $\gamma$ depending only on $Q, \Lambda, \delta$ such that
\begin{equation}\label{lowerb}
0\geq h(z) \geq -\gamma \left| O\right|^{1-\frac{4\alpha}{Q}}.
\end{equation}
\end{Remark}

\section{Critical density}\label{sec:critical density}

We now use the barriers constructed in Lemma \ref{lemmabar} to obtain critical density estimates on balls, first, for balls of radius less than $\epsilon_0$, where $\epsilon_0$ is as in Lemma \ref{subsolutionlemma}, and then for arbitrary balls.
Recall that $\eta$ is as in \eqref{eta}, and $\delta$ is a fixed number in $(0,\frac{1}{2})$.

\begin{Theorem}\label{propsmallr}
There exists $0<\e=\e(Q, \Lambda, \lambda)<1$ such that for all $z_0\in \Omega$ and $0 < r \leq \e_0$ with $B_{\eta r}(z_0)\subseteq \Omega$, for any $A\in M_n(\lambda, \Lambda) \cap C(\Omega, \omega)$ symplectic and for any $u\in C^2(B_{\eta r}(z_0))$ satisfying 
\begin{itemize}
\item[(i)] $u\geq 0$ in $B_{\eta r}(z_0)$,
\item[(ii)] $\Lop_Au\leq 0$ in $B_{\eta r}(z_0)$,
\item[(iii)] $ \inf_{B_{r/2}(z_0)}{u}< \frac{1}{2}$,
\end{itemize}
we have
\begin{equation}\label{criticaldensityestimate}
\left|\{z\in B_{r}(z_0)\,:\,u(z)< 1\}\right|\geq \e\left|B_{r}(z_0)\right|.
\end{equation}
\end{Theorem}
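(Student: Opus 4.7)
I would argue by contradiction using the barrier of Lemma~\ref{lemmabar}. Set $E:=\{z\in B_r(z_0):u(z)<1\}$, let $\e=\e(Q,\lambda,\Lambda)\in(0,1)$ be a small constant to be chosen, and assume that $|E|<\e|B_r(z_0)|$. By hypothesis (iii) there exists $z_*\in B_{r/2}(z_0)$ with $u(z_*)<1/2$, so $z_*\in E$ and $E$ is a nonempty open set.

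Assume first the geometric condition $\overline{E}\subseteq B_r(z_0)$. Setting $O':=E$, by outer regularity one finds an open $O$ with $\overline{E}\subseteq O\Subset B_r(z_0)$ and $|O|\leq 2|E|$. Lemma~\ref{lemmabar} then produces a smooth barrier $h_\mu\leq 0$ on $\Hn$ with $\Lop_A h_\mu\geq Cr^{-4\delta}>0$ on $O'$; moreover, the Remark following Lemma~\ref{lemmabar} together with the pointwise comparison $h\leq h_\mu\leq 0$ (coming from $\psi_\mu\in[0,1]$ and $g\leq 0$) yields $|h_\mu(z_*)|\leq\gamma|O|^{(2-4\delta)/Q}$. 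I would then define the comparison function $v(z):=h_\mu(z)+1$. Since $\Lop_A(v-u)=\Lop_A h_\mu-\Lop_A u\geq Cr^{-4\delta}>0$ on $O'$, and since $\partial O'\subseteq\{u=1\}$ by the geometric assumption, we have $v-u=h_\mu\leq 0$ on $\partial O'$. The weak maximum principle for classical subsolutions of the H\"ormander-type operator $\Lop_A$ then yields $v\leq u$ on $O'$.

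Evaluating at $z_*\in O'$ forces $h_\mu(z_*)+1\leq u(z_*)<1/2$, so $|h_\mu(z_*)|>1/2$. Inserting the pointwise bound on $h_\mu$, using $|O|<2\e|B_1(0)|r^Q$, and exploiting $r\leq\e_0<1$ together with $2-4\delta>0$ (since $\delta<1/2$) so that $r^{2-4\delta}\leq 1$, one arrives at
\[
\tfrac12<|h_\mu(z_*)|\leq\gamma\bigl(2\e|B_1(0)|\bigr)^{(2-4\delta)/Q}.
\]
This fails once $\e$ is chosen smaller than an explicit positive threshold depending only on $Q,\lambda,\Lambda$ (the fixed $\delta$ being absorbed), yielding the desired contradiction.

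The main obstacle is precisely the geometric condition $\overline{E}\subseteq B_r(z_0)$, needed to ensure $\partial E\subseteq\{u=1\}$: if $E$ reaches $\partial B_r(z_0)$, then $\partial E$ contains points on $\partial B_r(z_0)$ where only $u\leq 1$ is known and the boundary estimate $v-u\leq 0$ breaks down. I would handle this residual case by replacing $E$ with the connected component $U$ of $\{u<1\}\cap B_{\eta r}(z_0)$ containing $z_*$: when $U\Subset B_{\eta r}(z_0)$ one has $\partial U\subseteq\{u=1\}$, and the argument above applies verbatim in a Koranyi ball $B_{\hat r}(z_0)$ containing $U$, with $\hat r\leq r\leq\e_0$. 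The remaining case $U\cap\partial B_{\eta r}(z_0)\neq\emptyset$ requires a covering or iteration argument exploiting the generous ambient radius $\eta r=(2\sqrt{\Lambda/\lambda}+1)r$; the constants lost in such reductions are absorbed into the final $\e=\e(Q,\lambda,\Lambda)$.
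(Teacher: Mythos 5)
There is a genuine gap, and it sits exactly where you placed your ``residual case.'' The case you actually treat is vacuous: if $U$ (or $E$) is a component of $\{u<1\}$ with $\overline U$ compactly contained in the domain and $\partial U\subseteq\{u=1\}$, then the weak minimum principle for $\Lop_A$ (apply the maximum-principle argument to $u+\epsilon(C_1-e^{\beta x_1})$, whose $\Lop_A$ is strictly negative, and let $\epsilon\to0$) forces $u\geq 1$ throughout $U$, contradicting $u(z_*)<1/2$. You can also see the inconsistency from your own estimate: your comparison gives $|h_\mu(z_*)|>1/2$, a bound independent of $r$, while $|h_\mu|\leq\gamma|O|^{(2-4\delta)/Q}\leq\gamma\bigl(|B_1|\bigr)^{(2-4\delta)/Q}r^{2-4\delta}$, which is incompatible for small $r$ regardless of $\e$; this scaling mismatch (constant threshold versus barrier of size $r^{2-4\delta}$) signals that the configuration you compare on never occurs. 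Hence the whole content of the theorem is the case where $\{u<1\}$ reaches $\partial B_r(z_0)$, where only $u\geq 0$ is known, and deferring it to an unspecified ``covering or iteration argument'' leaves the theorem unproved: that step is the crux, not a constant-chasing detail.

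The paper's proof avoids the issue by never comparing $h$ with $u+1$ directly. It sets $\varphi(z)=d(z,z_0)^4$ and $w=\frac{Cr^{2-4\delta}}{4\Lambda(Q+2)}\bigl(u+\varphi/r^4-1\bigr)$. Since $\varphi/r^4=1$ on $\partial B_r(z_0)$, hypothesis (i) alone gives $w\geq 0$ there, so the set $O=\{z\in B_r(z_0):w<0\}$ (which contains $\bar z$ and is contained in $\{u<1\}$) satisfies $w=0$ on all of $\partial O$ automatically, whether or not $\{u<1\}$ meets $\partial B_r(z_0)$; moreover $\Lop_A\varphi\leq 4\Lambda(Q+2)r^2$ keeps $\Lop_A w\leq Cr^{-4\delta}$, so Lemma \ref{lemmabar} and the maximum principle give $h\leq w$ in $O$. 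Crucially, the prefactor $r^{2-4\delta}$ in $w$ matches the natural size $|O|^{(2-4\delta)/Q}$ of the barrier, so evaluating at $\bar z$ yields directly (no contradiction argument) $\gamma|O|^{\frac{2}{Q}(1-2\delta)}\geq \frac{7C}{64\Lambda(Q+2)}r^{2-4\delta}$, i.e. the scale-invariant bound $|O|\geq\e|B_r(z_0)|$. If you want to salvage your approach, you need an analogue of this device (an auxiliary function vanishing to the correct order on $\partial B_r(z_0)$ and carrying the factor $r^{2-4\delta}$), not a covering argument.
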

\begin{proof} Let $\varphi(z):=d(z,z_0)^4$. We have
\begin{eqnarray*}
\Lop_A\varphi(z)&=&4\trace(A(z))|x-x_0|^2+8\la A(z)(x-x_0),x-x_0\ra + 8\la A(z)\J(x-x_0),\J(x-x_0)\ra\\
&\leq& 4\Lambda(Q+2)|x-x_0|^2\leq4\Lambda(Q+2)r^2\qquad\mbox{ for any }z\in B_r(z_0).
\end{eqnarray*}
Let $C$ be the constant in \eqref{boundbarr} and consider
$$w(z):=\frac{C r^{2-4\delta}}{4\Lambda(Q+2)}\left(u(z)+ \frac{1}{r^4}\varphi(z)-1\right).$$
By (i), $w$ is nonnegative on $\partial B_{r}(z_0)$. By (iii), there exists a point $\overline{z}\in B_{\frac{r}{2}}(z_0)$ such that $u(\overline z)<1/2$. Therefore, 
\begin{equation}\label{upperb}
w(\overline{z})\leq\frac{Cr^{2-4\delta}}{4\Lambda(Q+2)}\left(\frac{1}{2}+\frac{1}{16}-1\right)=-\frac{7Cr^{2-4\delta}}{64\Lambda(Q+2)}.
\end{equation}
Let $O:=\{z\in B_{r}(z_0)\,:\,w(z)<0\}$. Notice that $O$ is open, $\overline{z} \in O$, and
$$O\subseteq\{z\in B_{r}(z_0)\,:\,u(z)< 1\}.$$ 
With this choice of $O$ and by defining $M := A(z_0)$, we consider the barriers $h, h_\mu$ in \eqref{defh}, \eqref{defhmu} respectively.
We claim that $$h-w\leq 0 \quad\mbox{in }O.$$ 
By definition, $h$ is non-positive. Since $w= 0$ on $\partial O$, it follows that $h - w \leq 0$ on $\partial O$. Suppose, for contradiction, that there exists $\zeta_0\in O$ such that $h(\zeta_0)-w(\zeta_0)=2\sigma>0$. Since $h_\mu$ converges uniformly to $h$ as $\mu$ goes to $0$, there exists $\mu_0>0$ such that $h_\mu(\zeta_0)-w(\zeta_0)\geq\sigma$ for $\mu\leq\mu_0$. Let $O'\Subset O$ containing $\zeta_0$ and $0 < \mu < \min\left\{\dfrac{r}{\sqrt{\lambda}},  \dfrac{\text{dist}(O',\partial O)}{2\sqrt{\Lambda}}, \mu_0 \right\}$. By (ii), $\Lop_A u\leq0$ in $B_{\eta r}(z_0)$, and so $\Lop_A w\leq C r^{-4\delta}$ in $B_{r}(z_0)$. Therefore, by Lemma \ref{lemmabar}, $\Lop_A(h_\mu-w)\geq 0$ in $O'$. From the weak maximum principle for $\Lop_A$ we then infer that $\max\limits_{\partial O'}{(h_\mu-w)}\geq\sigma$. Letting $\mu\rightarrow0^+$, we conclude that $\max\limits_{\partial O'}{(h-w)}\geq\sigma$ for any $O'$ containing $\zeta_0$. This is a contradiction, as $h-w\leq 0$ on $\partial O$. This proves the claim.

Therefore, by \eqref{lowerb}, \eqref{upperb} and recalling that $4\alpha=Q-2+4\delta$, we obtain
$$-\frac{7Cr^{2-4\delta}}{64\Lambda(Q+2)}\geq w(\overline{z})\geq h(\overline{z})\geq -\gamma\left| O\right|^{1-\frac{4\alpha}{Q}}=-\gamma\left| O\right|^{\frac{2}{Q}(1-2\delta)}.$$
This, of course, implies
\begin{eqnarray*}
\left| O\right|^{\frac{2}{Q}(1-2\delta)}&\geq&\frac{C}{\gamma\Lambda}\frac{7}{64(Q+2)}r^{2-4\delta}\\
&=& \frac{C}{\gamma\Lambda}\frac{7}{64(Q+2)}|B_1(0)|^{-\frac{2}{Q}(1-2\delta)}\left| B_r(z_0)\right|^{\frac{2}{Q}(1-2\delta)}=:C_0\left| B_r(z_0)\right|^{\frac{2}{Q}(1-2\delta)}.
\end{eqnarray*}
Choosing $\e=C_0^{\frac{Q}{2(1-2\delta)}}$ therefore gives us
$$\left|\{z\in B_{r}(z_0)\,:\,u(z)< 1\}\right|\geq\left|O\right|\geq\e\left|B_{r}(z_0)\right|.$$
Notice that $\epsilon$ depends only on $Q, \lambda, \Lambda$.
\end{proof}

\begin{Remark}\label{arbitraryball}
We can extend Theorem \ref{propsmallr} to the case where $r>\epsilon_0$ if we assume in addition that $|\Omega|<+\infty$. In this case,  \eqref{criticaldensityestimate} holds with a constant $\epsilon$ depending also on $\omega$ and $|\Omega|$.

Since  $B_{\eta r}(z_0)\subset \Omega$, we have $c_Q\,(\eta\,r)^Q\leq |\Omega|$, where $c_Q = |B_1(0)|$. Hence, $\epsilon_0 < r\leq \dfrac{1}{\eta}\left(\dfrac{|\Omega|}{c_Q} \right)^{1/Q}:=\bar C$.
From (iii) there exists $\bar z\in B_{r/2}(z_0)$ with $u(\bar z)<1$.
We then have that $B_{\epsilon_0/2}(\bar z)\subset B_r(z_0)$ and $B_{\epsilon_0\eta/2}(\bar z)\subset B_{\eta r}(z_0)$. Therefore we can apply Theorem \ref{propsmallr} with $r\leadsto \epsilon_0/2$ and $z_0\leadsto \bar z$ obtaining 
\[
|\{z\in B_{\epsilon_0/2}(\bar z):u(z)<1\}|\geq \epsilon |B_{\epsilon_0/2}(\bar z)|.
\]
Hence
\begin{align*}
|\{z\in B_r(z_0):u(z)<1\}|
&\geq 
|\{z\in B_{\epsilon_0/2}(\bar z):u(z)<1\}|\geq \epsilon |B_{\epsilon_0/2}(\bar z)|\\
&=\epsilon\,\left(\dfrac{\epsilon_0}{2r} \right)^Q\,|B_r(z_0)|\geq
\epsilon\,\left(\dfrac{\epsilon_0}{2\bar C} \right)^Q\,|B_r(z_0)|.
\end{align*}
Combining the above with \eqref{criticaldensityestimate} gives for $0<r<\bar C$
\[
|\{z\in B_r(z_0):u(z)<1\}|
\geq
\bar \epsilon
\,
|B_r(z_0)|,
\]
where $\bar \epsilon= \epsilon\min\left\{1, \,\left(\dfrac{\epsilon_0}{2\bar C} \right)^Q \right\}$.
\end{Remark}

\section{Conclusions and Harnack's inequality}\label{sec:harnack inequality}

The Harnack's inequality now follows from the double ball property (see \cite{Gutierrez-Tournier-Harnack, Tralli-Double-Ball}) and the results in \cite{FGL}. In fact, we have the following Theorem.

\begin{Theorem}\label{Harnack'sInequality} Let $0<\lambda\leq\Lambda$ and let $\omega$ be a function as in Definition \ref{equicontinuity}. There exist constants $C\geq 1$ and $K\geq \eta$ depending only on $\Lambda,\lambda,Q$ and a constant $\epsilon_0$ depending in addition on $\omega$, such that for any $A\in M_n(\lambda, \Lambda) \cap C(\Omega, \omega)$ symplectic and for any $u \in C^2(\Omega)$ satisfying
$$u\geq 0\,\,\,\,\,\mbox{and }\,\,\Lop_A u=0\,\,\,\,\,\mbox{in }B_{K r}(z_0)\subset\Omega \,\,\mbox{ for some }\, r\leq\frac{\eta}{K}\epsilon_0,$$
we have
\begin{equation}\label{harn}
\sup_{B_{r}(z_0)}{u}\leq C\inf_{B_{r}(z_0)}{u}.
\end{equation}
\end{Theorem}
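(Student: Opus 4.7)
The plan is to derive \eqref{harn} from the axiomatic framework of \cite{FGL}, which yields Harnack's inequality on a doubling quasi-metric space once two structural properties are established for nonnegative supersolutions on small balls: the \emph{critical density property} and the \emph{double ball property}. The Heisenberg group $\Hn$ with Koranyi balls is a doubling quasi-metric space with constants depending only on $Q$, and critical density is already available from Theorem \ref{propsmallr} (together with Remark \ref{arbitraryball}) for $0 < r \leq \e_0$.

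The remaining ingredient is the double ball property: there exists $\tau = \tau(\lambda,\Lambda,Q)\in(0,1)$ such that every nonnegative supersolution $u$ of $\Lop_A$ on $B_{\eta r}(z_0)\subset\Omega$ with $r$ sufficiently small and $\inf_{B_{r/2}(z_0)} u \geq 1$ satisfies $\inf_{B_r(z_0)} u \geq \tau$. Following the strategy of \cite{Tralli-Double-Ball, Gutierrez-Tournier-Harnack}, I would compare $u$ on the annulus $B_{\eta r}(z_0)\setminus \overline{B_{r/2}(z_0)}$ with a normalized multiple of
\begin{equation*}
v(z) := \bigl[\phi_M(z_0^{-1}\circ z)\bigr]^{-\alpha} - (\eta r)^{-4\alpha},
\end{equation*}
where $M := A(z_0)$ and $\alpha := (Q-2)/4 + \delta$. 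The computation that proves Lemma \ref{subsolutionlemma} shows, after reversing signs (since $\phi_M^{-\alpha} = -\alpha g$), that $v$ is a classical subsolution of $\Lop_A$ on $B_{\e_0}(z_0)\cap\Omega\setminus\{z_0\}$, and hence on the annulus as long as $\eta r \leq \e_0$. By \eqref{equivalentquasidistance}, $v \leq 0$ on $\partial B_{\eta r}(z_0)$ while $v \geq c(\lambda,\Lambda)r^{-4\alpha} > 0$ on $\overline{B_r(z_0)}\setminus B_{r/2}(z_0)$. The weak maximum principle for $\Lop_A$ applied to $u - v/\sup_{\partial B_{r/2}(z_0)} v$ on the annulus (exactly as in the proof of Theorem \ref{propsmallr}) then produces $u \geq \tau$ on all of $B_r(z_0)$ with $\tau$ depending only on $\lambda,\Lambda,Q$.

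With both structural axioms available on balls of radius $\lesssim \e_0$, a standard Calder\'on-Zygmund-type covering iteration as in \cite{FGL} yields a weak Harnack inequality for nonnegative supersolutions together with a local boundedness estimate for nonnegative subsolutions, whose combination for a nonnegative solution of $\Lop_A u = 0$ is precisely \eqref{harn}. The enlargement factor $K \geq \eta$ is produced by the covering argument, and the restriction $r \leq (\eta/K)\e_0$ keeps every intermediate sub-ball in the small-radius regime where both axioms hold uniformly. Consequently $C$ and $K$ depend only on $Q, \lambda, \Lambda$, while $\e_0$ inherits from Lemma \ref{subsolutionlemma} its dependence on $\omega$.

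The main technical obstacle is the clean verification of the double ball property. One must carefully orient the signs in the perturbation estimate of Lemma \ref{subsolutionlemma} so that $\phi_M^{-\alpha}$, rather than $-\phi_M^{-\alpha}/\alpha$, plays the role of the subsolution in the relevant annulus, and one must invoke the uniform comparability between $\phi_M$ and $\rho^4$ in \eqref{equivalentquasidistance} in order that the ratio $\inf_{\overline{B_r}\setminus B_{r/2}} v / \sup_{\partial B_{r/2}} v$ be bounded below by a positive constant depending only on $\lambda,\Lambda,Q$, and not on $r$. Once this is in place, the invocation of the abstract framework of \cite{FGL} is essentially mechanical, as is the standard H\"older continuity consequence of \eqref{harn} mentioned in Section \ref{sec:harnack inequality}.
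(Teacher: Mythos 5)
Your overall route coincides with the paper's: both proofs feed the critical density estimate of Theorem \ref{propsmallr} and the double ball property into the axiomatic machinery of \cite{FGL} (doubling, reverse doubling and ring conditions being automatic for Koranyi balls), and both obtain $C,K$ depending only on $\lambda,\Lambda,Q$ with $\epsilon_0$ carrying the dependence on $\omega$. The one genuine difference is your treatment of the double ball property: the paper simply cites \cite[Theorem 4.1]{Gutierrez-Tournier-Harnack} (see also \cite{Tralli-Double-Ball}), where it is proved for \emph{all} $A\in M_n(\lambda,\Lambda)$, with no continuity or symplectic hypothesis and no restriction on the radius, whereas you re-derive it from the barrier $v=\phi_M^{-\alpha}(z_0^{-1}\circ\cdot)-c_r$ using Lemma \ref{subsolutionlemma}; this is legitimate in the small-radius, symplectic regime of the theorem (and is self-contained), but it is strictly less general and forces the extra constraint $\eta r\leq\epsilon_0$ at every scale where the double ball property is invoked inside $B_{Kr}(z_0)$ --- harmless, since $\eta$ depends only on $\lambda,\Lambda$ and is absorbed into the admissible range of $r$, but worth stating. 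One detail in your barrier needs fixing: with $c_r=(\eta r)^{-4\alpha}$ the inequality $v\leq 0$ on $\partial B_{\eta r}(z_0)$ does \emph{not} follow from \eqref{equivalentquasidistance}, which only gives $d_M\geq d/\sqrt{\Lambda}$, hence $\phi_M^{-\alpha}\leq\Lambda^{2\alpha}(\eta r)^{-4\alpha}$ there; you should subtract $\sup_{\partial B_{\eta r}(z_0)}\phi_M^{-\alpha}(z_0^{-1}\circ\cdot)$ (or $\Lambda^{2\alpha}(\eta r)^{-4\alpha}$) instead, and then positivity of $v$ on $\overline{B_r(z_0)}$ still holds because $\eta^2>\Lambda/\lambda$ by \eqref{eta}, so the resulting $\tau$ depends only on $\lambda,\Lambda,Q$ as you claim. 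With that correction, your argument is a valid alternative whose payoff is independence from the external double ball reference, at the cost of generality the paper gets for free by citation.
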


\begin{proof} We use the axiomatic approach to prove Harnack's inequality developed in \cite{FGL} for the doubling quasi metric H\"older space $(\H^n,d,\left|\cdot\right|)$. The reverse doubling and ring conditions are satisfied from the homogeneity. Moreover, if $A\in M_n(\lambda, \Lambda)$, then the family of nonnegative supersolutions of $\Lop_A$ satisfies the double ball property by \cite[Theorem 4.1]{Gutierrez-Tournier-Harnack} (see also \cite{Tralli-Double-Ball}). For this step the continuity of $A$, the symplectic condition, and the restriction on the radius $r$ are unnecessary. In addition, if $A\in  C(\Omega, \omega)$ and is symplectic, then from Theorem \ref{propsmallr} the family
$$\{u\in C^2(V,\R) : V\subset\Omega\cap B_{\eta \epsilon_0}(z_0), u\geq 0 \mbox{ and } \Lop_A u\leq 0 \mbox{ in } V\}$$
satisfies the $\epsilon$-critical density for each $z_0\in\Omega$. In fact, if $B_{\eta r}(\zeta_0)\subseteq \Omega\cap B_{\eta \epsilon_0}(z_0)$, then $r\leq\epsilon_0$ and Theorem \ref{propsmallr} is applicable. 
It follows from \cite[Theorem 4.7, set of conditions (A), and Theorem 5.1]{FGL} that there exist constants $C, K$ depending only on $\lambda, \Lambda, Q$ such that all nonnegative solutions $u$ to $\Lop_A u=0$ in $B_{K r}(z_0)\subset\Omega \cap B_{\eta \epsilon_0}(z_0)$ satisfy \eqref{harn}. If $r\leq\frac{\eta}{K}\epsilon_0$ we just need $B_{K r}(z_0)\subset\Omega$, and the statement is proved. Since the constants in the critical density and double ball properties depend only on $\lambda,\Lambda, Q$, it follows that the constants $C$ and $K$ in the Harnack inequality also depend only on $\lambda,\Lambda,Q$.
\end{proof}
Also, from \cite[Theorem 5.3]{FGL}, it follows that the solutions to $\Lop_A u=0$ are H\"older continuous with the estimate
\begin{equation}\label{Holder}
|u(z) - u(\zeta)| \leq C\, \left(\frac{d(z,\zeta)}{r}\right)^{\alpha}\sup_{B_{r}}{|u|}\qquad\mbox{ for all }z,\zeta\in B_{r/3}
\end{equation}
for $r$ small compared to $\epsilon_0$, where the constants $C$ and $\alpha$ depend only on $Q, \lambda$, and $\Lambda$.

By Remark \ref{arbitraryball}, using similar reasoning as in the proof of Theorem \ref{Harnack'sInequality}, we also have the following.

\begin{Theorem}\label{5.2} Let $0<\lambda\leq\Lambda$, $\omega$ a function as in Definition \ref{equicontinuity}, and $|\Omega|<+\infty$. There exist constants $C, K$ depending only on $\Lambda,\lambda,Q,\omega, |\Omega|$ such that, for any $A\in M_n(\lambda, \Lambda) \cap C(\Omega, \omega)$ symplectic and any $u \in C^2(\Omega)$ satisfying
$$u\geq 0\,\,\,\,\,\mbox{and }\,\,\Lop_A u=0\,\,\,\,\,\mbox{in }B_{K r}(z_0)\subset\Omega,$$
we have
$$\sup_{B_{r}(z_0)}{u}\leq C\inf_{B_{r}(z_0)}{u}.$$
\end{Theorem}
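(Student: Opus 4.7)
The plan is to mirror the argument of Theorem \ref{Harnack'sInequality}, but feed the axiomatic machinery of \cite{FGL} with the enhanced critical density provided by Remark \ref{arbitraryball} in place of Theorem \ref{propsmallr}. The point is that once $|\Omega|<+\infty$ is assumed, the restriction $r\leq \epsilon_0$ in the critical density estimate can be dropped at the price of letting the constant depend on $\omega$ and $|\Omega|$, and this is precisely what removes the smallness requirement on $r$ from the Harnack inequality.

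First, I would verify that the three structural ingredients required by the axiomatic approach of \cite{FGL} are available on the doubling quasi-metric H\"older space $(\H^n,d,|\cdot|)$ without any radius restriction. The reverse doubling and ring conditions come from the homogeneity of the group, exactly as noted in the proof of Theorem \ref{Harnack'sInequality}. The double ball property for nonnegative supersolutions of $\Lop_A$ holds for any $A\in M_n(\lambda,\Lambda)$ by \cite[Theorem 4.1]{Gutierrez-Tournier-Harnack} (see also \cite{Tralli-Double-Ball}); crucially, this step is insensitive to continuity of $A$, to the symplectic condition, and to any upper bound on the radius.

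The main point is then the critical density. For any $z_0\in\Omega$ and any $r>0$ such that $B_{\eta r}(z_0)\subseteq\Omega$, Remark \ref{arbitraryball} yields a constant $\bar\epsilon=\bar\epsilon(\lambda,\Lambda,Q,\omega,|\Omega|)$ such that every $u\in C^2(B_{\eta r}(z_0))$ with $u\geq 0$, $\Lop_A u\leq 0$, and $\inf_{B_{r/2}(z_0)}u<\tfrac{1}{2}$ satisfies
\[
|\{z\in B_r(z_0):u(z)<1\}|\geq \bar\epsilon\,|B_r(z_0)|.
\]
Thus the family of nonnegative supersolutions of $\Lop_A$ in open subsets $V\subset\Omega$ satisfies $\bar\epsilon$-critical density on all balls $B_r(z_0)$ with $B_{\eta r}(z_0)\subset\Omega$, with no smallness constraint on $r$.

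With the double ball property and the (unrestricted) critical density in hand, I would invoke \cite[Theorem 4.7, set of conditions (A), and Theorem 5.1]{FGL} to produce constants $C,K$, depending only on the parameters entering those two properties (hence on $\lambda,\Lambda,Q,\omega,|\Omega|$), such that every nonnegative $u$ with $\Lop_A u=0$ in $B_{Kr}(z_0)\subset\Omega$ satisfies $\sup_{B_r(z_0)}u\leq C\inf_{B_r(z_0)}u$. No auxiliary restriction on $r$ is needed beyond $B_{Kr}(z_0)\subset\Omega$, which is the hypothesis of the theorem. The only potential obstacle is a bookkeeping one, namely checking that the axiomatic scheme of \cite{FGL} indeed transports the extra dependence of $\bar\epsilon$ on $\omega$ and $|\Omega|$ into the final constants $C$ and $K$ without introducing any further radius restriction; this is routine since the machinery is purely quantitative in the critical density constant.
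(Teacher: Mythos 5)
Your proposal is correct and follows essentially the same route as the paper: the paper's (very brief) proof is exactly to replace the critical density estimate of Theorem \ref{propsmallr} by the unrestricted version in Remark \ref{arbitraryball}, keep the double ball property from \cite{Gutierrez-Tournier-Harnack}, and rerun the axiomatic argument of \cite{FGL} as in Theorem \ref{Harnack'sInequality}, with the constants now inheriting the dependence on $\omega$ and $|\Omega|$.
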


Again, by \cite[Theorem 5.3]{FGL}, we obtain an estimate similar to \eqref{Holder}, with constants $C$ and $\alpha$ depending in addition on $\omega$ and $|\Omega|$.

\end{document}